\newcommand{\refeq}[1]{(\ref{#1})}
\def\1{\raisebox{2pt}{\rm{$\chi$}}}
\newtheorem{theorem}{Theorem}[section]
\newtheorem{corollary}[theorem]{Corollary}
\newtheorem{lemma}[theorem]{Lemma}
\newtheorem{definition}[theorem]{Definition}
\newtheorem{example}[theorem]{Example}
\newcommand{\R}{{\mathbb R}}
\newcommand{\eps}{{\varepsilon}}
\def\1{\raisebox{2pt}{\rm{$\chi$}}}
\newcommand{\Lip}{\operatorname{Lip}}
\newcommand{\abs}[1]{\left|#1\right|}
\newcommand{\Rn}{\mathbb{R}^n}
\def\vint_#1{\mathchoice%
         {\mathop{\kern 0.2em\vrule width 0.6em height 0.69678ex depth -0.58065ex
                 \kern -0.8em \intop}\nolimits_{\kern -0.4em#1}}%
         {\mathop{\kern 0.1em\vrule width 0.5em height 0.69678ex depth -0.60387ex
                 \kern -0.6em \intop}\nolimits_{#1}}%
         {\mathop{\kern 0.1em\vrule width 0.5em height 0.69678ex
             depth -0.60387ex
                 \kern -0.6em \intop}\nolimits_{#1}}%
         {\mathop{\kern 0.1em\vrule width 0.5em height 0.69678ex depth -0.60387ex
                 \kern -0.6em \intop}\nolimits_{#1}}}
\def\vintslides_#1{\mathchoice%
         {\mathop{\kern 0.1em\vrule width 0.5em height 0.697ex depth -0.581ex
                 \kern -0.6em \intop}\nolimits_{\kern -0.4em#1}}%
         {\mathop{\kern 0.1em\vrule width 0.3em height 0.697ex depth -0.604ex
                 \kern -0.4em \intop}\nolimits_{#1}}%
         {\mathop{\kern 0.1em\vrule width 0.3em height 0.697ex depth -0.604ex
                 \kern -0.4em \intop}\nolimits_{#1}}%
         {\mathop{\kern 0.1em\vrule width 0.3em height 0.697ex depth -0.604ex
                 \kern -0.4em \intop}\nolimits_{#1}}}
\newcommand{\kint}{\vint}
\newcommand{\intav}{\vint}
\newcommand{\aveint}[2]{\mathchoice%
         {\mathop{\kern 0.2em\vrule width 0.6em height 0.69678ex depth -0.58065ex
                 \kern -0.8em \intop}\nolimits_{\kern -0.45em#1}^{#2}}%
         {\mathop{\kern 0.1em\vrule width 0.5em height 0.69678ex depth -0.60387ex
                 \kern -0.6em \intop}\nolimits_{#1}^{#2}}%
         {\mathop{\kern 0.1em\vrule width 0.5em height 0.69678ex depth -0.60387ex
                 \kern -0.6em \intop}\nolimits_{#1}^{#2}}%
         {\mathop{\kern 0.1em\vrule width 0.5em height 0.69678ex depth -0.60387ex
                 \kern -0.6em \intop}\nolimits_{#1}^{#2}}}
\newcommand{\ud}{\, d}
\newcommand{\half}{{\frac{1}{2}}}
\newcommand{\ol}{\overline}
\newcommand{\Om}{\Omega}
\newcommand{\I}{\textrm{I}}
\newcommand{\II}{\textrm{II}}
\newcommand{\dist}{\operatorname{dist}}
\newcommand{\F}{\mathcal{F}}
\begin{document}

\title[On  $p$-harmonious functions]{On the existence and uniqueness of $p$-harmonious functions}

\author[Luiro]{Hannes Luiro}
\address{Department of Mathematics and Statistics, University of
Jyv\"askyl\"a, PO~Box~35, FI-40014 Jyv\"askyl\"a, Finland}
\email{hannes.s.luiro@jyu.fi}

\author[Parviainen]{Mikko Parviainen}
\address{Department of Mathematics and Statistics, University of
Jyv\"askyl\"a, PO~Box~35, FI-40014 Jyv\"askyl\"a, Finland}
\email{mikko.j.parviainen@jyu.fi}

\author[Saksman]{Eero Saksman}
\address{Department of Mathematics and Statistics, University of
Helsinki, PO~Box~68, FI-00014 Helsinki, Finland}
\email{eero.saksman@helsinki.fi}

\date{August 15, 2012}
\keywords{Dynamic programming principle, mean value iteration, measurability, $(p,\eps)$-parabolic, $p$-harmonic functions, $p$-Laplace, tug-of-war with noise} \subjclass[2010]{35A35, 35J92, 91A05, 91A15}

\begin{abstract}
We give a self-contained and short proof for the existence, uniqueness and measurability of so called $p$-harmonious functions. The proofs only use elementary analytic tools.
As  a consequence, we obtain existence, uniqueness and measurability of value functions for the tug-of-war game with noise.  
\end{abstract}

\maketitle

\section{Introduction}\label{se:intro}

There has recently been increasing interest in the tug-of-war games, partly because they seem to give new insight in the theory of nonlinear partial differential equations, see for example \cite{peresssw09, peress08}.  The dynamic programming principle (DPP)
\begin{equation}
\label{eq:dpp}
u(x)=\frac{\alpha}{2}\sup_{B_\eps(x)}u+\frac{\alpha}{2}\inf_{B_\eps(x)}u+\beta\intav_{B_\eps(x)}u\ud y
\end{equation}
$\alpha+\beta=1,\ \alpha,\beta\ge 0$, plays a central role in the theory of tug-of-war with noise.  Intuitively, the above formula says that a value of the game at a point $x$ is a sum of three possible outcomes of a game round with corresponding probabilities: either a maximizer or minimizer wins the round, or a random noise is added. In Section \ref{sec:game} we give a brief description of the game.  Moreover, many numerical solvers  for nonlinear partial differential equations are based on functions satisfying a dynamic programming equation,  see \cite{oberman11}.  

The objective of this paper is first to give a self-contained and short proof for the existence, uniqueness and measurability of a solution to \eqref{eq:dpp} with given boundary values (Theorems  \ref{th:dpp} and \ref{thm:comparison}) using only elementary analytic tools. Secondly, we obtain as a corollary (Theorem \ref{th:value}) the existence  of the value function for the tug-of-war with noise. Our proof readily verifies that  the value function for the game is the unique, measurable solution  to the dynamic programming equation \eqref{eq:dpp}. 

When proving estimates for the game, it is often necessary to fix a strategy according to minimal or maximal values of a given value function, or design a strategy that utilizes the actions of the opponent. Lemma~\ref{le:st} shows that such a strategy  can be build in a measurable way and Corollary \ref{co:strategies} that values are robust with respect to the information on the  history of the game  that the players are allowed to use. In the last section, we remark that the  analogous results in the parabolic case are almost immediate. 

One technical difficulty is that solutions to \eqref{eq:dpp} can be discontinuous. Therefore, in Section \ref{sec:cont} we also consider a continuous version by  modifying the dynamic programming principle close to the boundary. 

The proofs utilize a natural iteration $u_{i+1}=Tu_i$ of the dynamic programming principle (precise definition of $T$ is given in \eqref{eq:operator}). However, the fact that $\beta>0$ is needed to show that the convergence is uniform and that the limit satisfies \eqref{eq:dpp}. 
Example \ref{ex:nonBorel} illustrates a delicate nature of measurability. Indeed, there is a bounded Borel function $u$ such that the function $x\mapsto \sup_{\ol B_\eps(x)}u(y)$ is not Borel. 

Finally, we recall the connection to the partial differential equations. If $u$ is harmonic, then it satisfies the well known mean value property
\[
\begin{split}
u(x) =  \frac{1}{\abs{B_\eps(x)}}\int_{ B_\eps(x)} u \ud y ,
\end{split}
\]
that is \eqref{eq:dpp} locally with $\alpha =0$ and
$\beta =1$. In this case, the iterative method for approximating solutions is known as method of relaxations \cite{courantfl28, doyles00}.
On the other hand, functions satisfying
\eqref{eq:dpp} with $\alpha =1$ and $\beta =0$
are called  harmonious functions in \cite{legruyer07} and
\cite{legruyera98}. As $\eps$ goes to zero, they approximate solutions
to the infinity Laplace equation. A similar approximation result was shown in \cite{manfredipr12} for  $p$-harmonic functions (\cite{heinonenkm93}) and solutions to \eqref{eq:dpp}  with the choice
\[
\alpha=\frac{p-2}{p+n}, \qquad \beta=\frac{n+2}{p+n}.
\]
For this reason, solutions to \eqref{eq:dpp} are  also called $p$-harmonious functions.
By interpreting \eqref{eq:dpp} in the asymptotic sense as in \cite{manfredipr10}, it actually characterizes the viscosity solutions to the $p$-Laplace equation when $p>1$. There is also a version utilizing medians in \cite{hartenstiner}. 

\section{Existence and uniqueness}

In this section we give an easy argument for existence and uniqueness of functions satisfying \eqref{eq:dpp} in an arbitrary bounded domain and with prescribed Borel boundary values. 

\newcommand{\Sup}{\operatorname{Sup}}
\newcommand{\Inf}{\operatorname{Inf}}

\subsection{Existence via iteration}\label{subse:existence}

Let $\eps>0$, $\Omega\subset\R^n$ be bounded domain, $\alpha, \beta>0$, $\alpha+\beta=1\,$. The $\eps$-boundary strip of $\Omega$ is defined by
\begin{equation}
\Gamma_{\eps}=\{\,x\in \Rn\setminus\Omega\,:\,\dist(x,\Omega)\leq \varepsilon\,\}\,.
\end{equation}
Let $\mathcal{F}_\varepsilon$ denote the set of  all non-negative, Borel measurable and bounded functions defined on $\Omega_{\eps}=\Gamma_{\eps}\cup\Omega$.
We intend to iterate the operator $T$ on $\mathcal{F}_\eps$, defined by
\begin{equation}
\label{eq:operator}
\begin{split}
Tu(x)=\begin{cases}
\frac{\alpha}{2}\sup_{B_\eps(x)}u+\frac{\alpha}{2}\inf_{B_\eps(x)}u+\beta\intav_{B_\eps(x)}u\,\,&\text{ if }x\in\Omega\\
u(x)\,\,&\text{ if } x\in\Gamma_{\eps}.
\end{cases}
\end{split}
\end{equation}

In order to check that $Tu\in \mathcal{F}_\eps$ for every $u\in\mathcal{F}_\eps$ we need  to verify that $Tu$ is Borel measurable. For that purpose it is enough to show that for any bounded Borel function $v$ in $\Omega_\eps$  the functions
$$
\sup_{y\in B_\eps(x)}v(y)\quad {\rm and}\quad \inf_{y\in B_\eps(x)}v(y),\quad x\in \Omega,
$$
are Borel in $\Omega$. This follows immediately by observing that
for any $\lambda\in\R$ the set 
\begin{equation}
\label{eq:sup-meas}
\begin{split}
\{ x\in\Omega\; :\; \sup_{B_\eps(x)} v>\lambda\}=\Omega\cap \Big( \bigcup_{y\in \Omega_\eps\,:\,v(y)>\lambda}B_\eps(y)\Big)
\end{split}
\end{equation} is open.

Next we prove existence for a function satisfying the dynamic programming equation,
\begin{theorem}
\label{th:dpp}
Given a bounded Borel boundary function
$F:\Gamma_\eps\to \R$, there is a bounded Borel function $u:\Omega_\eps\to\R$ that satisfies
the DPP with the boundary values $F$, i.e.\ $u=Tu$ and $u_{|\Gamma_\eps}=F.$
In fact, $u$ is the uniform limit
$$
u=\lim_{j\to\infty} u_j, \quad {\rm with}\;\; u_{j+1}=Tu_j\;\;{\rm for}\;\; j=0,1,\ldots,
$$
where the starting point of the iteration is 
$$
u_0(x)=\begin{cases} \inf_{y\in\Gamma_\eps} F&{\rm for}\;\; x\in \Omega \;\;{\rm and}\\
F(x) & {\rm for}\;\; x\in \Gamma_\eps.
\end{cases}
$$
\end{theorem}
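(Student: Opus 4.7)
The plan is to show that the iteration $u_{j+1}=Tu_j$ converges uniformly to a bounded Borel fixed point of $T$, by sandwiching it between a matching decreasing iteration from above and exploiting $\beta>0$ to force uniform decay of the gap. The operator $T$ is monotone ($f\le g\Rightarrow Tf\le Tg$) directly from its definition. Since $u_0\equiv \inf_{\Gamma_\eps}F$ on $\Omega$, each of $\sup_{B_\eps(x)}u_0$, $\inf_{B_\eps(x)}u_0$, and $\intav_{B_\eps(x)}u_0$ is at least $\inf F$, so $Tu_0\ge u_0$ on $\Omega$ and $Tu_0=F=u_0$ on $\Gamma_\eps$; monotonicity then yields $u_j\uparrow$. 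Similarly, starting from the upper barrier $v_0\equiv \sup_{\Gamma_\eps}F$ on $\Omega$ with $v_0=F$ on $\Gamma_\eps$ and iterating $v_{j+1}=Tv_j$, one obtains $v_j\downarrow$ with $u_j\le v_j\le \sup F$. Both sequences then have bounded Borel pointwise limits $u\le v$.

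To upgrade to uniform convergence, set $w_j:=v_j-u_j\ge 0$, which vanishes on $\Gamma_\eps$. The elementary bound $\sup v_j-\sup u_j\le\sup(v_j-u_j)=\sup w_j$ together with its analogue for $\inf$ gives on $\Omega$
\[
w_{j+1}(x)\le \alpha\,\mathcal Sw_j(x)+\beta\,\mathcal Aw_j(x),\qquad \mathcal Sw(x):=\sup_{B_\eps(x)}w,\quad \mathcal Aw(x):=\intav_{B_\eps(x)}w.
\]
Let $M_j:=\sup_\Omega w_j$. Iterating this bound $k$ times, using the linearity of $\mathcal A$ and the subadditivity of $\mathcal S$, one obtains $w_{j+k}\le \sum \alpha^a\beta^b\,\mathcal X_1\cdots\mathcal X_k w_j$, the sum running over all $2^k$ strings $\mathcal X_i\in\{\mathcal S,\mathcal A\}$ with $a$ copies of $\mathcal S$ and $b$ of $\mathcal A$. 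Each summand is bounded by $M_j$, and the coefficients sum to $(\alpha+\beta)^k=1$. The distinguished all-average term is sharper: $\mathcal A^kw_j(x)=\int K_k(x,y)w_j(y)\,dy$, where $K_k$ is the $k$-fold convolution of the normalized indicator of $B_\eps$, supported in $B_{k\eps}(x)$. Choosing $k_0$ with $k_0\eps\ge \mathrm{diam}(\Omega)+2\eps$ forces $B_{k_0\eps}(x)\supset\Omega_\eps$ for every $x\in\Omega$, and $\int_{\Gamma_\eps}K_{k_0}(x,y)\,dy\ge c>0$ uniformly in $x$; since $w_j\equiv 0$ on $\Gamma_\eps$, this term drops to $\beta^{k_0}(1-c)M_j$, and summing all contributions gives
\[
M_{j+k_0}\le (1-\beta^{k_0} c)\,M_j.
\]
Thus $M_j\to 0$ geometrically and $u_j\to u=v$ uniformly on $\Omega_\eps$.

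Uniform convergence $u_j\to u$ immediately yields uniform convergence of $\sup_{B_\eps(\cdot)}u_j$, $\inf_{B_\eps(\cdot)}u_j$ and $\intav_{B_\eps(\cdot)}u_j$ to the corresponding quantities for $u$, so $Tu_j\to Tu$ uniformly; combined with $Tu_j=u_{j+1}\to u$ this gives $u=Tu$, and $u$ is Borel as a pointwise limit of Borel functions. The main obstacle is the uniform-convergence step: a single application of $T$ produces strict contraction only where $B_\eps(x)$ meets $\Gamma_\eps$ in positive measure, so for interior points the boundary information must be ``felt'' through repeated averaging; this is precisely where $\beta>0$ enters the argument and where it would break down in the pure tug-of-war case $\beta=0$.
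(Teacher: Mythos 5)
Your proof is correct, but it takes a genuinely different route from the paper's. The paper builds the same increasing sequence $u_j$ and then proves uniform convergence by a soft contradiction argument: assuming $M:=\lim_j\sup_{\Omega_\eps}(u-u_j)>0$, it combines the monotone pointwise limit with the dominated convergence theorem (to make $\beta\intav_{B_\eps(x)}(u-u_k)$ uniformly small) and a careful choice of indices $k<\ell$ to reach $M-2\delta\le\alpha(M+\delta)+\delta$, impossible for small $\delta$ since $\alpha<1$. You instead sandwich $u_j$ between a decreasing iteration $v_j$ started from the constant upper barrier and prove a quantitative contraction for the gap $w_j=v_j-u_j$, expanding $(\alpha\mathcal S+\beta\mathcal A)^{k}$ via subadditivity of $\mathcal S$ and isolating the pure-average term, whose kernel $K_{k_0}$ deposits a uniformly positive mass on $\Gamma_\eps$ where $w_j$ vanishes. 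Your route buys an explicit geometric rate $M_{jk_0}\le(1-\beta^{k_0}c)^jM_0$, which the paper's argument does not give, at the price of two details you should make explicit: the iterated inequality $w_{j+k}\le\sum\alpha^a\beta^b\,\mathcal X_1\cdots\mathcal X_kw_j$ requires extending $w_j$ by zero outside $\Omega_\eps$ so that the compositions are defined (harmless, since $w_j\ge0$ and $w_{j+1}$ vanishes off $\Omega$, so the recursion survives the extension); and the uniform bound $\int_{\Gamma_\eps}K_{k_0}(x-y)\,dy\ge c>0$ rests on $K_{k_0}$ being continuous and strictly positive on $B_{k_0\eps}(0)$ for $k_0\ge2$ together with $|\Gamma_\eps|>0$. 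Both arguments use $\beta>0$ essentially --- yours through kernel positivity, the paper's through dominated convergence --- and your closing step (uniform convergence of the sup, inf and average terms, hence $Tu_j\to Tu$ and $u=Tu$) coincides with the paper's. Finally, the Borel measurability of each $u_j$ is not automatic: it relies on the observation \eqref{eq:sup-meas}, made in the paper just before the theorem, that $\sup_{B_\eps(\cdot)}v$ and $\inf_{B_\eps(\cdot)}v$ are Borel for bounded Borel $v$; you should invoke that rather than assert measurability of the limit in passing.
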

\begin{proof}
By the very definition  $u_1\geq u_0$, which yields that $u_2=Tu_1\geq Tu_0=u_1$. Similarly, in general $u_{j+1}\geq u_j$ in $\Omega_{\eps}$ for all $j=0,1,\ldots$. Hence the sequence $u_j$ is increasing and since it bounded from the above by $\sup_{y\in \Gamma_\eps}F(y)<\infty$ we may define the bounded  Borel function $u$ as the monotone pointwise limit
$$
u(x):=\lim_{j\to\infty}u_j(x),\quad x\in\Omega_\eps.
$$

We claim that the convergence is uniform. Suppose contrary to our claim that
\begin{equation}\label{eqM1}
M:=\lim_{j\to\infty}\sup_{x\in\Omega_\eps}(u-u_j)(x)>0.
\end{equation}
Fix arbitrary $\delta >0$ and select $k\geq 1$ large enough so that
\begin{equation}\label{eqM2}
u-u_k\leq M+\delta\quad {\rm in}\;\; \Omega_\eps.
\end{equation}
By the dominated convergence theorem,  we may also assume that 
\begin{equation}\label{eqM3}
\sup_{x\in\Omega}\beta\intav_{B_\eps(x)}(u-u_k)(y)\,dy\,\,\leq\delta\,.
\end{equation}
By \eqref{eqM1} we may choose $x_0\in \Omega$ with the property  $u(x_0)-u_{k+1}(x_0)\geq M-\delta.$ Then choose $\ell>k$ large enough so that $u(x_0)-u_{\ell+1}(x_0)<\delta$, whence it follows that
\begin{equation}\label{eqM4}
u_{\ell+1}(x_0)-u_{k+1}(x_0)\geq M-2\delta.
\end{equation}

Observe then that for any set $A$ one has
$\sup_A u_\ell-\sup_{A}u_k\leq \sup_{A}(u_\ell-u_k)\,$
and the same holds if one replaces above the suprema on the lefthandside with infima.
The iterative definition and monotonicity of the sequence $u_j$ together with estimates \eqref{eqM2}--\eqref{eqM4} yield that
\begin{eqnarray*}
M-2\delta &\leq &u_{\ell+1}(x_0)-u_{k+1}(x_0)\\
&=&\frac{\alpha}{2}\sup_{B_\eps(x_0)}u_{\ell}+\frac{\alpha}{2}\inf_{B_\eps(x_0)}u_{\ell}+\beta\intav_{B_\eps(x_0)}u_\ell\\
&&\phantom{kkkkkkk}\,\,-\big(\frac{\alpha}{2}\sup_{B_\eps(x_0)}u_{k}+\frac{\alpha}{2}\inf_{B_\eps(x_0)}u_{k}+\beta\intav_{B_\eps(x_0)}u_k\big)\\
&\leq & \alpha\sup_{B_\eps(x_0)}(u_{\ell}-u_{k})+\beta\intav_{B_\eps(x_0)}(u_\ell-u_k)\\
&\leq & \alpha\sup_{B_\eps(x_0)}(u-u_{k})+\beta\intav_{B_\eps(x_0)}(u-u_k)\\
&\leq &\alpha (M+\delta)+\delta.
\end{eqnarray*}
This is a contradiction if $\delta$ is chosen small enough.

By the uniform convergence the limit $u$ obviously satisfies the DPP and it has the right boundary values by construction.
\end{proof}

\subsection{Elementary proof of uniqueness}

The uniqueness of $p-$harmonious function with given boundary data follows immediately from the following result.

\begin{theorem}
\label{thm:comparison}
Suppose that $u$ and $u'$ are two $p$-harmonious functions with boundary values $g$ and $g'$ (respectively) on $\Gamma_{\eps}$. Then
\[\sup_{x\in\Omega}|u'-u|(x)\leq \sup_{x\in\Gamma_{\eps}}|g'-g|(x)\,.\]
\end{theorem}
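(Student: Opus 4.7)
My plan is to prove the one-sided inequality $\sup_\Omega(u'-u)\leq\sup_{\Gamma_\eps}(g'-g)$, from which the theorem follows by interchanging the roles of $u$ and $u'$. Write $w=u'-u$ and set $S:=\sup_{\Omega_\eps}w$, $M:=\sup_{\Gamma_\eps}(g'-g)$; assume toward contradiction that $S>M$. Subtracting the two DPPs at an arbitrary $x\in\Omega$ and using $\sup u'-\sup u\leq\sup(u'-u)$ together with the analogous estimate for $\inf$, I first obtain the subsolution inequality
\[
w(x)\leq\alpha\sup_{B_\eps(x)}w+\beta\intav_{B_\eps(x)}w\qquad(x\in\Omega),
\]
while $w\leq M$ on $\Gamma_\eps$.

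The heart of the argument is an inductive estimate that iterates this inequality so that the averaging term eventually sees the boundary strip. Define $\phi_0:=\chi_{\Gamma_\eps}$ and, for $N\geq 0$, $\phi_{N+1}(x):=\intav_{B_\eps(x)}\phi_N$ for $x\in\Omega$ with $\phi_{N+1}\equiv 1$ on $\Gamma_\eps$. By induction on $N$, using $\sup_{B_\eps(x)}w\leq S$ in the sup-term, plugging the inductive bound for $w$ under the integral (legitimate because $B_\eps(x)\subset\Omega_\eps$ when $x\in\Omega$), and invoking the recursion for $\phi_N$ together with $\alpha+\beta=1$, one obtains
\[
w(x)\leq S-\beta^N\phi_N(x)(S-M)\qquad\text{for all }x\in\Omega_\eps,\ N\geq 0.
\]
The bound on $\Gamma_\eps$ is automatic, since $\beta^N\leq 1$ gives $S-\beta^N(S-M)\geq M\geq w$ there, and this is exactly what makes the integration step of the induction consistent.

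To close the argument I will produce a uniform lower bound $\phi_{N_0}\geq p_0>0$ on $\Omega$ using that $\Omega$ is bounded. Choose a unit vector $e\in\R^n$ and $R>0$ with $\Omega\subset\{y:y\cdot e\leq R\}$; the forward cap $H(y):=\{z\in B_\eps(y):(z-y)\cdot e>\eps/2\}$ occupies a fixed fraction $c_n>0$ of $B_\eps$. Iterating through such caps for $N_0:=\lceil 4R/\eps\rceil+1$ steps strictly advances the $e$-coordinate by at least $N_0\eps/2>2R$, so the trajectory must exit $\Omega$ into $\Gamma_\eps$ by time $N_0$; unwinding the $\phi_N$-recursion then yields $\phi_{N_0}(x)\geq c_n^{N_0}=:p_0$ for every $x\in\Omega$. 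Substituting into the main estimate,
\[
\sup_\Omega w\leq S-\beta^{N_0}p_0(S-M)<S,
\]
which contradicts the fact that $S>M=\sup_{\Gamma_\eps}w$ forces $\sup_\Omega w=S$. The main obstacle will be executing the induction cleanly so that the same estimate holds on both $\Omega$ and $\Gamma_\eps$; the geometric lower bound on $\phi_{N_0}$ is then a standard forward-marching calculation.
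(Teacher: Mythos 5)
Your proof is correct, but it follows a genuinely different route from the paper's. Both arguments start from the same subsolution inequality $w(x)\leq\alpha\sup_{B_\eps(x)}w+\beta\intav_{B_\eps(x)}w$, and both ultimately exploit the two essential hypotheses, $\beta>0$ and the boundedness of $\Omega$, by marching in a fixed direction in steps comparable to $\eps$. The paper, however, argues qualitatively: it first shows that $M=\sup_\Omega(u'-u)$ is actually \emph{attained} (a maximizing sequence plus absolute continuity of the integral force $u'-u=M$ a.e.\ on some ball), then shows the contact set $G=\{u'-u=M\}$ satisfies $|B_\eps(x)\setminus G|=0$ whenever $x\in G$, and finally propagates $G$ in the $e_1$-direction to contradict boundedness. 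You replace the attainment-and-propagation mechanism by a quantitative iteration: your $\phi_N$ is exactly the probability that the pure $\eps$-step random walk exits $\Omega$ within $N$ steps, the induction $w\leq S-\beta^N\phi_N(S-M)$ is a discrete maximum-principle estimate, and the forward-cap computation gives the uniform lower bound $\phi_{N_0}\geq c_n^{N_0}$ (here one should record that a step leaving $\Omega$ from a point of $\Omega$ necessarily lands in $\Gamma_\eps$, since the step length is below $\eps$; this is what makes the walk absorbed once it exits). Your approach buys an explicit quantitative contradiction and sidesteps the measure-theoretic care needed to show $G\neq\emptyset$, at the cost of a longer bookkeeping induction; the paper's is shorter once attainment is in hand. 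The one point to spell out carefully in a full write-up is the one you already flag: the inductive bound must be verified on $\Gamma_\eps$ as well as on $\Omega$, since $B_\eps(x)$ may meet $\Gamma_\eps$, and the trivial inequality $\beta^{N+1}\leq 1$ is what keeps the two cases consistent.
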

\begin{proof}
It is enough to show that 
\[M:=\sup_{x\in\Omega}(u'-u)(x)\leq \sup_{x\in\Gamma_{\eps}}(g'-g)(x)=:m\,,\]
because the rest follows by symmetric argument.
Assume that the claim is not true, so that $M>m\,$.
Hence, since both $u$ and $u'$ satisfy the DPP, it follows for every $x\in\Omega$ that
\begin{align}\label{DPPseuraus}
u'(x)-u(x)=&\frac{\alpha}{2}\big(\sup_{B_\eps(x)}u'-\sup_{B_\eps(x)}u\big)\,
+\frac{\alpha}{2}\big(\inf_{B_\eps(x)}u'-\inf_{B_\eps(x)}u\big)\notag\\
&\,\,\,\,\,+\,\beta\intav_{B_\eps(x)}(u'-u)(y)\,dy\,\notag\\
\leq& \,\alpha\sup_{y\in B_\eps(x)}(u'-u)(y)\,+\beta\intav_{B_\eps(x)}(u'-u)(y)\,dy\,\notag\\
\leq&\,\alpha M\,+\,\beta\intav_{B_\eps(x)}(u'-u)(y)\,dy\,.
\end{align}

We consider the set  
\begin{equation}\label{supset}
G:=\{x\in\Omega_{\eps}\,:\,u'(x)-u(x)=M\},
\end{equation}
for which $G\subset\Omega$ by the counter assumption. Let us first show that $G$ is non-empty. For that, choose (using the boundedness of $\Omega$) a sequence $x_k\in\Omega$ such that $(u'-u)(x_k)\to M$ as $k\to\infty$ and $x_k\to x_0\in\overline{\Omega}$. Then it follows that
\begin{equation*}
\intav_{B_\eps(x_0)}(u'-u)(y)\,dy\,=\lim_{k\to\infty}\intav_{B_\eps(x_k)}(u'-u)(y)\,dy\,=\,M\,,
\end{equation*}
where the convergence follows from the absolute continuity of the integral and the last equality is a direct consequence of (\ref{DPPseuraus}). Since $u'-u\leq M$ in $B_\eps(x_0)\subset \Omega_{\varepsilon}$ we get that $u'-u=M$ almost everywhere in $B_\eps(x_0)$ and therefore $|B_\eps(x_0)\setminus G|=0\,$. This of course also implies that $G\not=\emptyset\,$. The crucial point now is that $G$ also has a following property:
\begin{equation}\label{cruc1}
\text{If}\quad x\in G,\quad\text{then} \quad \abs{B_\eps(x)\setminus G}=0.
\end{equation}
This holds, because $(u'-u)(x)=M$ implies that $x\in\Omega$ and then it follows from (\ref{DPPseuraus}) and the fact $u'-u\leq M$ in $\Omega_{\varepsilon}$ that $u'-u=M$ a.e. in $B_\eps(x)$. 

Clearly, property (\ref{cruc1}) (combined with $G\not=\emptyset$) contradicts with the boundedness of set $G\subset\Omega\,$: Let $e_1$ denote the first standard base vector and observe that (\ref{cruc1}) guarantees that if $x\in G$ then 
$G\cap B_{\frac{\eps}{4}}(x+\frac{\eps}{2}e_1)\not=\emptyset\,$. Using this property it is easy to construct a sequence of points $x_k\in G$ such that  $e_1 \cdot x_k$ tends to infinity.
\end{proof}

The following corollary says that the iteration process giving the unique $p$-harmonious function is essentially independent on initial values  $u_0$. 
\begin{corollary}
\label{cor:dpp}
Let
$F:\Gamma_\eps\to \R$ and $u_0:\Omega_\eps\to\R$ be bounded Borel functions so that 
$u_{0_{|\Gamma_{\eps}}}=F$ and let functions $u_j$ be defined by the iteration as   
in Theorem \ref{th:dpp}. Then $u_j$ converges uniformly to the unique $p$-harmonious function $u$ with the boundary values $F$.   
\end{corollary}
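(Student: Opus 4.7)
Denote by $u$ the unique $p$-harmonious function with boundary data $F$ (existence from Theorem~\ref{th:dpp}, uniqueness from Theorem~\ref{thm:comparison}), and set $M:=\sup_{\Omega_\eps}|u_0-u|=\sup_{\Omega}|u_0-u|$, finite because $u_0$ and $u$ coincide with $F$ on $\Gamma_\eps$. The plan is a sandwich argument: construct a subsolution $v_0^-$ and a supersolution $v_0^+$ of the DPP, both sharing the boundary values $F$, that bracket $u_0$, and then exploit monotonicity of $T$ together with the proof of Theorem~\ref{th:dpp} to squeeze the iterates $u_j$ between two sequences that converge uniformly to $u$.

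Concretely, I would define
\[
v_0^-(x):=\begin{cases} u(x)-M, & x\in\Omega,\\ F(x), & x\in\Gamma_\eps,\end{cases}\qquad v_0^+(x):=\begin{cases} u(x)+M, & x\in\Omega,\\ F(x), & x\in\Gamma_\eps.\end{cases}
\]
By construction $v_0^-\leq u_0\leq v_0^+$ on $\Omega_\eps$. The delicate step is verifying the sub/super-solution properties $Tv_0^-\geq v_0^-$ and $Tv_0^+\leq v_0^+$ on $\Omega$. For $v_0^-$ one has $v_0^-\geq u-M$ pointwise on $\Omega_\eps$ (with strict inequality on $\Gamma_\eps$ unless $M=0$); combined with monotonicity of $T$ and the fact that $u$ satisfies the DPP on $\Omega$, this yields, for $x\in\Omega$,
\[
Tv_0^-(x)\geq T(u-M)(x)=Tu(x)-M=u(x)-M=v_0^-(x),
\]
and the analogous computation handles $v_0^+$.

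Setting $v_j^\pm:=T^j v_0^\pm$, the sub/supersolution property and monotonicity of $T$ give $v_j^-\nearrow$ and $v_j^+\searrow$ pointwise, with the a priori bounds $v_j^-\leq u\leq v_j^+$. These monotone, uniformly bounded sequences of Borel functions satisfy exactly the hypotheses used in the proof of Theorem~\ref{th:dpp} (monotonicity, boundedness, the iterative definition $v_{j+1}^\pm=Tv_j^\pm$, and $\beta>0$ for the dominated-convergence step), hence each converges uniformly on $\Omega_\eps$. The uniform limits are $p$-harmonious with boundary data $F$, so by Theorem~\ref{thm:comparison} both equal $u$.

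Finally, applying $T^j$ to $v_0^-\leq u_0\leq v_0^+$ and iterating monotonicity of $T$ produces $v_j^-\leq u_j\leq v_j^+$; combined with $v_j^-\leq u\leq v_j^+$ this gives $|u_j-u|\leq v_j^+-v_j^-$, and the right-hand side tends to $0$ uniformly on $\Omega_\eps$. The main point of care is the subsolution computation: since $v_0^-$ equals $F$ rather than $F-M$ on $\Gamma_\eps$, one cannot simply shift $u$ by $-M$ throughout $\Omega_\eps$, and the monotonicity of $T$ together with the DPP for $u$ is precisely what makes the barrier work without spoiling the correct boundary values.
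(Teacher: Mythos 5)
Your proof is correct and follows essentially the same route as the paper's: sandwich $u_0$ between a subsolution and a supersolution sharing the boundary data $F$, iterate, and conclude from the monotonicity of $T$, the uniform convergence of monotone iterations established in the proof of Theorem \ref{th:dpp}, and uniqueness via Theorem \ref{thm:comparison}. The only difference is cosmetic: the paper brackets with the constant barriers $\pm C$ (with $C$ a bound for the data), whereas you use the shifted barriers $u\mp M$ on $\Omega$; both choices produce the same monotone squeezing argument.
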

\begin{proof}
Suppose  that $|F|\leq C$ in $\Gamma_{\eps}\,$. 
The proof of Theorem \ref{th:dpp} remains valid if one, instead of $u_0=\inf_{\Gamma_{\eps}}F$ in $\Omega$, chooses $u'_0=-C$ or $u''_0=C$ in $\Omega$ as a starting point of the iteration. The only difference is that in the second case the sequence $u''_j$, corresponding the starting point $u_0''$, is decreasing. Since $T$ is order preserving, it follows that $u'_j\leq u_j\leq u''_j$. This implies the claim, since by Theorem \ref{thm:comparison} $u_j'$ and $u_j''$ both converge to the same $p$-harmonious function.
\end{proof}   
The choice of open balls in the definition of the game (and in the DPP) is motivated by the fact that if closed balls are used instead, one faces some nontrivial measurability issues. This is immediately seen by the following example which verifies that the supremum operator  which replaces the function value at $x$ by its supremum in a closed ball around $x$  does not preserve Borel functions.
\begin{example}
\footnote{We are grateful to  Tapani Hyttinen for providing this example to us.}
\label{ex:nonBorel}
There is a bounded Borel function $u:\R^3\to \R$ such that the function $x\mapsto 
\sup_{y\in \overline{B_1}(x)}u(y)$ is not Borel.
\end{example}
\begin{proof}
It is classical (see Theorems 6.7.2 and 6.7.11 in \cite{bogachev06}) that one may pick  a Borel set $A\subset [-1,1]^2$ with the property that its projection to the first coordinate
$$
B=\{ x\in \R\;:\; (x,y)\in A\; {\rm for\; some }\; y\in\R \}
$$
fails to 
be Borel. We identify $\R\subset\R^3$ with the $x_1$-axis, and $\R^2$ with the $(x_1,x_2)$-plane, so that  $A$ is identified as a subset $\R^3$ in a natural manner.
Fix any homeomorphism $\psi:\R^2\to\R^2$ for which the image of the  the
line segment 
satisfies
$$
\psi ([-1,1]\times\{0\})\subset \{ (y_1,y_2)\in\R^2\; :\; y_1^2+y_2^2=1\} .
$$
Finally, define the homeomorphism $\phi:\R^3\to \R^3$
by setting
$$
\phi(x_1,x_2,x_3)=(x_1,\psi(x_2,x_3)).
$$
Then  $\phi(A)$ is a subset of the cylinder
$$
\{(x_1,x_2,x_3)\subset \R^3\; :\; x_2^2+x_3^2=1\}\subset\R^3
$$
and Borel.
Denote by $u:=\chi_{\phi(A)}$ the characteristic function of $\phi(A)$, which is 
a Borel function.  A direct inspection verifies that we have
$$
B=\{ x\in\R^3\; :\;\sup_{y\in \overline{B_1}(x)}u(y)=1\}\cap \R,
$$
and thus $\{ x\in\R^3\; :\;\sup_{y\in \overline{B_1}(x)}u(y)=1\}$ cannot be a Borel set.
It follows that the function $x\mapsto \sup_{y\in \overline{B_1}(x)}u(y)$ fails to be Borel.
\end{proof}

\section{Existence of the value function for the tug of war game}
\label{sec:game}
In this section, we establish connections to the  tug-of-war with noise, see \cite{manfredipr12}. In the tug-of-war with noise a token is placed at $x_0\in \Om$ and a biased coin with probabilities $\alpha$ and $\beta$ ($\alpha+\beta=1$) is tossed. If we get heads (probability $\beta$), then the next point $x_1\in B_\eps(x_0)$ is chosen according to the uniform probability distribution on $B_\eps(x_0)$. On the other hand, if we get tails (probability $\alpha$), then a fair coin is tossed and a winner of the toss moves the token to $x_1\in B_\eps(x_0)$ according to his/her strategy. The players continue playing until they exit the bounded domain $\Om$. Then Player II pays Player I the amount given by the pay-off function $F(x_\tau)$, where $x_\tau$ is the first point outside $\Om$.

\subsection{Existence of measurable strategies}\label{subse:measurability}

In many applications, for example already in the proof of Theorem \ref{th:value} below, one needs to construct strategies  according to the almost infimum or the almost supremum of a given Borel function in a given ball. In order to ensure that this kind of strategies are Borel we need the following lemma.
\begin{lemma}\label{le:st}
Let $u:\Omega_\eps\to\R$ be a bounded Borel function, and let $\delta >0.$ Then there are bounded  Borel functions $S_{\sup},S_{\inf}:\Omega\to\Omega_\eps$ such that $S_{\inf}(x), S_{\sup}(x)\in B_\eps(x)\,$ and
$$
u(S_{\sup} (x))\geq\sup_{B_\eps(x)} u-\delta\quad \qquad 
u(S_{\inf}(x))\leq \inf_{B_\eps(x)} u+\delta\quad
$$
 for all $ x\in\Omega$.
\end{lemma}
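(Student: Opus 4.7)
My plan is to select $S_{\sup}$ (the construction of $S_{\inf}$ being symmetric) via a $\delta$-decomposition of the range of $u$ combined with a countable basis of open balls in $\R^n$.

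First, partition the range by setting $I_k=[k\delta,(k+1)\delta)$ and $V_k=u^{-1}(I_k)\cap\Omega_\eps$ for $k\in\Z$. Each $V_k$ is Borel, and since $u$ is bounded only finitely many $V_k$ are non-empty, so $k(x):=\max\{k\in\Z:B_\eps(x)\cap V_k\neq\emptyset\}$ is well defined. Writing $W_k:=u^{-1}([k\delta,\infty))\cap\Omega_\eps$, the identity $\{x\in\Omega:k(x)\geq k\}=\Omega\cap\bigcup_{y\in W_k}B_\eps(y)$ shows, exactly as in \eqref{eq:sup-meas}, that this set is open in $\Omega$. Hence $k(\cdot)$ is upper semicontinuous and in particular Borel. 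By maximality, every $y\in B_\eps(x)$ satisfies $u(y)<(k(x)+1)\delta$, giving $\sup_{B_\eps(x)}u\leq(k(x)+1)\delta$.

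Next, I pre-choose witnesses. Fix an enumeration $\{O_j\}_{j\in\N}$ of the open balls in $\R^n$ with rational centers and rational radii (a topological basis). For every pair $(k,j)$ with $V_k\cap O_j\neq\emptyset$ pick, once and for all, a representative $p_{k,j}\in V_k\cap O_j$ (countable choice). On each Borel piece $\Omega_k:=\{x\in\Omega:k(x)=k\}$ set
$$j(x):=\min\{j\in\N:V_k\cap O_j\neq\emptyset\text{ and }p_{k,j}\in B_\eps(x)\}.$$
This minimum is attained: for $x\in\Omega_k$ choose any $z\in V_k\cap B_\eps(x)$ and any basis ball with $z\in O_j\subset B_\eps(x)$; then $p_{k,j}$ is defined and lies in $B_\eps(x)$. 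The predicate $p_{k,j}\in B_\eps(x)$ is open in $x$, so $j(\cdot)$ is Borel on $\Omega_k$.

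Finally put $S_{\sup}(x):=p_{k(x),j(x)}$. On each $\Omega_k$ this map takes values in the countable set $\{p_{k,j}\}_{j}$ via the Borel index $j(\cdot)$, so $S_{\sup}$ is Borel on $\Omega$. By construction $S_{\sup}(x)\in V_{k(x)}\cap B_\eps(x)\subset\Omega_\eps$, so $u(S_{\sup}(x))\geq k(x)\delta\geq\sup_{B_\eps(x)}u-\delta$, and $S_{\inf}$ is produced analogously using $\min$ in place of $\max$. The main obstacle is to land strictly inside the open ball $B_\eps(x)$ and simultaneously inside the possibly ragged Borel set $V_{k(x)}$; a naive nested-cube selector only produces a limit point in the closure and can escape either constraint. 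Pre-choosing canonical representatives $p_{k,j}$ for every basis ball removes this difficulty, reducing the run-time task to the Borel-measurable question of whether a preselected point already sits in the current $B_\eps(x)$.
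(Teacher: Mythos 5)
Your construction is correct. Every step checks out: the bands $V_k=u^{-1}([k\delta,(k+1)\delta))$ are Borel and only finitely many are non-empty; the identity $\{x\in\Omega:k(x)\geq k\}=\Omega\cap\bigcup_{y\in W_k}B_\eps(y)$ does show these superlevel sets are open (so $k(\cdot)$ is in fact \emph{lower} semicontinuous, not upper as you wrote --- a harmless slip, since all you use is Borel measurability); maximality of $k(x)$ gives $\sup_{B_\eps(x)}u\leq(k(x)+1)\delta$; the minimum defining $j(x)$ is attained because any $z\in V_{k(x)}\cap B_\eps(x)$ sits in some basis ball $O_j\subset B_\eps(x)$; and $S_{\sup}$ is Borel because it is constant on each set of the countable Borel partition $\{\Omega_k\cap\{j(\cdot)=j\}\}_{k,j}$.

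The route is genuinely different from the paper's, and more elementary. The paper also reduces to a countable family of pre-chosen witness points indexed by rational balls (choosing $x_B\in B$ with $u(x_B)\geq\sup_B u-\delta/2$ for each rational ball $B$), but it then obtains the Borel selector in one stroke by invoking Lusin's countable Borel selection theorem applied to the Borel set $\{(x,y):|x-y|<\eps,\ u(y)>\sup_{B_\eps(x)}u-\delta\}\cap(\R^n\times S)$. Your two-stage discretization --- first pin down the $\delta$-level $k(x)$ of the supremum via open superlevel sets, then pick the first witness $p_{k,j}$ by a minimum-index argument over the countable basis --- replaces that external selection theorem with an explicit, self-contained construction. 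What the paper's approach buys is brevity and a selector that approximates the actual supremum of $u$ on $B_\eps(x)$ directly; what yours buys is that the measurability of the selection is completely transparent, at the cost of the extra bookkeeping with the partition $\{V_k\}$ and the index function $j(\cdot)$. Both yield the required bounded Borel map with $u(S_{\sup}(x))\geq\sup_{B_\eps(x)}u-\delta$, and the $S_{\inf}$ case is symmetric in either treatment.
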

\begin{proof}It is enough to find  $S_{\sup}.$ Denote by $\mathcal{B}$ the countable collection of all balls $B\subset \Omega_\eps$ with the rational radius and  with rational coordinates for the centerpoint. For every $B\in\mathcal{B}$ choose a point $x_B\in B$ so that 
$$
v(x_B)\geq \sup_{y\in B} v(y) -\delta/2.
$$ 
Let us denote the collection of all the chosen points by $S:=\{ x_B\; :\;B\in {\mathcal B}\} $, so that $S$ is countable. Since an arbitrary  open ball
$B_r(x)\subset \Omega_\eps$ can be written as a union of balls in $\mathcal{B}$, we obtain for every $x\in\Omega$
$$
\sup_{B_\eps(x)} v\leq \sup_{y\in S\cap B_\eps(x)}v(y)+\delta/2.
$$
The function $S_{\sup}$ is simply obtained by applying Lusin's countable Borel selection theorem (see for example \cite[ p.\ 210]{srivastava98}) to the Borel set
$$
\big\{ (x,y)\in \Omega\times\Omega_\eps\; : \;|x-y|<\eps\;\,{\rm and}\;\; \sup_{B_\eps(x)} v<v(y)-\delta\big \}\cap \big(\R^n\times S\big)\,.\qedhere
$$
\end{proof}

\subsection{Existence of value}\label{subse:value}

We next verify that the unique solution to the DPP obtained in the previous section indeed equals the value function of the game, especially the value of the game is Borel.
Recall that the value of the game for Player I
is given by
\[
u^\eps_\I(x_0)=\sup_{S_{\I}}\inf_{S_{\II}}\,\mathbb{E}_{S_{\I},S_{\II}}^{x_0}[F(x_\tau)]
\]
while the value of the game for Player II is given by
\[
u^\eps_\II(x_0)=\inf_{S_{\II}}\sup_{S_{\I}}\,\mathbb{E}_{S_{\I},S_{\II}}^{x_0}[F(x_\tau)],
\]
where  $x_0$ is a starting point of the game and $S_\I,S_{\II}$ are the strategies employed by the players (more precise definitions are given below). The point $x_\tau$ denotes the first point outside $\Om$, where  $\tau$ refers to the first
time we hit $\Gamma_{\eps}$. The payoff function $F:\Gamma_\eps
\to \R$ is a given, bounded, and Borel measurable. 

A history  of a game up to step $k$ is defined to be a vector of the first $k+1$
game states $x_0,\ldots,x_k$ and $k$ coin tosses $c_1,\ldots,c_k$, that is,
\begin{equation}
\label{eq:history}
\begin{split}
\left(x_0,(c_1,x_1),\ldots,(c_k,x_{k})\right).
\end{split}
\end{equation}
Above $c_j\in \mathcal C:=\{0,1,2\},$ where $0$ denotes that Player I wins, $1$ that Player II wins, and finally $2$ that a random step occurs.

The history of the game is related to the filtration $\{\F_k\}_{k=0}^\infty$, where
${\mathcal F}_0:=\sigma (x_0)$ and
\begin{equation}\label{eq:filtration}
{\mathcal F}_k:=\sigma (x_0, (c_1,x_1),\ldots, (c_k,x_k))\quad \mbox{for}\quad k\geq 1.
\end{equation}

A strategy
$S_\I$
for Player I is a collection of  Borel measurable  functions that give the next
game position given the history of the game. For example
\[
S_\I{\left(x_0,(c_1,x_1),\ldots,(c_k,x_k)\right)}=x_{k+1}\in  B_\eps(x_k)
\]
if Player I wins the toss. Similarly Player II  uses  a strategy $S_{\II}$.

\begin{theorem}\label{th:value}
It holds that $u=u^\eps_\I = u^\eps_\II$, where $u$ is the solution of the DPP
obtained in Theorem \ref{th:dpp}. In particular, the game has the value at each point, and the value function is Borel.
\end{theorem}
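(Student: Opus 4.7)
The plan is to prove $u^\eps_\I = u = u^\eps_\II$ by establishing $u^\eps_\II \leq u \leq u^\eps_\I$ and combining this with the elementary minimax inequality $u^\eps_\I \leq u^\eps_\II$. Once these equalities hold, the Borel-measurability of the value is automatic from Theorem \ref{th:dpp}, since $u$ is Borel by construction.

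To show $u^\eps_\II(x_0) \leq u(x_0)$, fix $\delta > 0$ and invoke Lemma \ref{le:st} to construct a Borel strategy $S_\II^0$ for Player II such that at step $k$, from position $x_k$, the chosen point $y_k = S_\II^0\big(x_0,(c_1,x_1),\ldots,(c_k,x_k)\big)$ satisfies $u(y_k)\leq \inf_{B_\eps(x_k)} u + 2^{-k}\delta$. For an arbitrary Player I strategy $S_\I$, the DPP and this choice yield
\[
\E[u(x_{k+1})\mid \F_k] \leq \frac{\alpha}{2}\sup_{B_\eps(x_k)}u + \frac{\alpha}{2}\Big(\inf_{B_\eps(x_k)}u + 2^{-k}\delta\Big) + \beta\intav_{B_\eps(x_k)}u\,dy \leq u(x_k) + \frac{\alpha\delta}{2}2^{-k}.
\]
Therefore $M_k := u(x_k)+\alpha\delta\cdot 2^{-k}$ is a bounded supermartingale with $M_0 = u(x_0)+\alpha\delta$.

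Since $\beta>0$ and $\Omega$ is bounded, the random step (which occurs at each round with probability $\beta$ independently of past moves) forces exit almost surely: comparing with a biased random walk, or more directly via a Borel--Cantelli type argument, gives $\tau<\infty$ a.s.\ with exponentially decaying tails. Optional stopping (which applies since $M_k$ is bounded uniformly by $\|u\|_\infty + \alpha\delta$) together with $u=F$ on $\Gamma_\eps$ then yields
\[
\E^{x_0}_{S_\I, S_\II^0}[F(x_\tau)] = \E[u(x_\tau)] \leq \E[M_\tau] \leq M_0 = u(x_0)+\alpha\delta.
\]
Taking the supremum over $S_\I$ and then letting $\delta\to 0$ gives $u^\eps_\II(x_0)\leq u(x_0)$. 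A completely symmetric argument, in which Lemma \ref{le:st} is used to produce a near-supremum-selecting Borel strategy for Player I, yields $u(x_0)\leq u^\eps_\I(x_0)$.

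The principal obstacle is the termination estimate $\P(\tau<\infty)=1$, which relies essentially on $\beta>0$; without the noise there is no automatic exit mechanism for a purely strategic game and the supermartingale argument would not close. A secondary technical subtlety is that both strategies must be constructed Borel-measurably as functions of arbitrarily long histories $(x_0,(c_1,x_1),\ldots,(c_k,x_k))$, and this is exactly what Lemma \ref{le:st} provides when applied iteratively at each time step.
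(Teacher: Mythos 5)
Your proposal is correct and follows essentially the same route as the paper: a near-infimum Borel strategy for Player II from Lemma \ref{le:st}, the supermartingale $M_k=u(x_k)+\alpha\delta\,2^{-k}$, optional stopping, and the trivial inequality $u^\eps_\I\leq u^\eps_\II$ plus symmetry to close the chain. The only difference is that you explicitly justify $\tau<\infty$ almost surely via the $\beta>0$ noise step, a point the paper leaves implicit when invoking optional stopping; that is a welcome clarification rather than a deviation.
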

\begin{proof}
It is enough to show that 
\begin{equation}\label{eq:II} 
u_\II^\eps \leq u
\end{equation}
because by symmetry $u_\I^\eps \geq u$ and trivially
$u_\II^\eps\geq   u_\I^\eps$.

To prove \eqref{eq:II} we play the game as follows:
Player II follows a strategy
$S_\II^0$ such that at $x_{k-1}\in \Om$ he chooses to step to a
point that almost minimizes $u$, that is, to a point $x_k \in 
B_\eps(x_{k-1})$ such that
\[
u(x_k)\leq\inf_{B_\eps(x_{k-1})} u+\eta 2^{-k}
\]
for some fixed $\eta>0$. This strategy can be chosen to be Borel thanks to Lemma \ref{le:st}.

Then for any strategy $S_\I$ of Player I we  use the  definition of the game  and the fact that $u$ satisfies the DPP to compute:

\[
\begin{split}
\mathbb{E}_{S_\I, S^0_\II}^{x_0}&[u(x_k)+\eta 2^{-k}\,|\F_{k-1}]
\\
&=\frac{\alpha}{2} \left\{ u \big(S^0_\II(x_0,\ldots ,c_{k-1},x_{k-1})\big)+ u\big(S_\I(x_0,\ldots ,c_{k-1},x_{k-1})\big)\right\}\\
&\phantom{aaaaaaaaaaaaa}+ \beta\kint_{ B_\eps(x_{k-1})}
u \ud y+\eta 2^{-k}\\
&\leq \frac{\alpha}{2} \big\{\inf_{ B_\eps(x_{k-1})} u+\eta 2^{-k}+\sup_{  B_\eps(x_{k-1})}
u\big\}+ \beta\kint_{ B_\eps(x_{k-1})}
u \ud y+\eta 2^{-k}\\
&=u(x_{k-1})+\eta 2^{-k}(1+\alpha/2)\\
&\leq u(x_{k-1})+\eta 2^{-(k-1)}.
\end{split}
\]
Above we brutely  estimated the strategy of Player I by $\sup$.

Thus, regardless of the strategy $S_\I$ the process $
M_k=u(x_k)+\eta 2^{-k}
$ is a supermartingale with respect to the filtration $\{ {\mathcal F_k}\}_{k\geq 0}$. Observe that $F(x_\tau)=u(x_\tau)$ and, as the supermartingale is bounded, 
deduce  by optional stopping that
\[
\begin{split}
u_\II^\eps(x_0)&= \inf_{S_{\II}}\sup_{S_{\I}}\,\mathbb{E}_{S_{\I},S_{\II}}^{x_0}[F(x_\tau)]\le \sup_{S_\I} \mathbb{E}_{S_\I,
S^0_\II}^{x_0}[F(x_\tau)+\eta 2^{-\tau}]\\ 
&\leq\sup_{S_\I}  \mathbb{E}^{x_0}_{S_\I, S^0_\II}[u (x_0)+\eta ]=u(x_0)+\eta.
\end{split}
\]
Since $\eta$ was arbitrary this proves the claim.
\end{proof}

One may naturally play the tug-of-war-game, where the strategies for
choosing the next point $x_{k+1}$ are allowed to depend just on part of the information on the game up to time $k.$ As natural examples, we list three different possibilities:
\begin{enumerate}

\item The strategies may depend only on the current point $x_k.$

\item The strategies may depend only on the full history of locations $x_1,\ldots, x_k.$

 \item The strategies may depend only on both the full history of locations $x_1,\ldots, x_k
 $, and on the outcomes of coin tosses until time $k$.

\end{enumerate}
The above proof considers   alternative (3), but actually only uses  strategies from alternative (1). We thus obtain:
\begin{corollary}\label{co:strategies}
The value of the game exists, is Borel, and equals the unique solution of the  DPP, independently of what alternative for the allowed strategies {\rm (1)} -- {\rm (3)} is used. 
\end{corollary}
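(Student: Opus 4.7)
The plan is to leverage the proof of Theorem \ref{th:value} essentially verbatim, observing that the almost-optimal strategies constructed there depend only on the current token position and therefore already belong to the most restrictive alternative (1). Writing $\mathcal{S}_j$ for the strategy class under alternative (j), we have the inclusion $\mathcal{S}_1\subset\mathcal{S}_2\subset\mathcal{S}_3$, and the supermartingale bound obtained in Theorem \ref{th:value} will propagate to all three settings. Let $u^{\eps,(j)}_\I$ and $u^{\eps,(j)}_\II$ denote the Player~I and Player~II values when both players' strategies are restricted to $\mathcal{S}_j$.

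First I would record the symmetric companion of the proof of Theorem \ref{th:value}: let $S_\I^0\in\mathcal{S}_1$ step from $x_{k-1}\in\Omega$ to a point $x_k\in B_\eps(x_{k-1})$ with $u(x_k)\geq \sup_{B_\eps(x_{k-1})}u-\eta 2^{-k}$, available by Lemma \ref{le:st}. The same computation as in Theorem \ref{th:value} (with inf replaced by sup and inequalities reversed) shows that $M_k=u(x_k)-\eta 2^{-k}$ is a submartingale against any $S_\II\in\mathcal{S}_3$, and optional stopping then yields $\mathbb{E}^{x_0}_{S_\I^0,S_\II}[F(x_\tau)]\geq u(x_0)-\eta$ for every such $S_\II$. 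Now fix $j\in\{1,2,3\}$ and $\eta>0$. Since $S_\II^0\in\mathcal{S}_1\subset\mathcal{S}_j$, the definition of $u^{\eps,(j)}_\II$ gives
\[
u^{\eps,(j)}_\II(x_0)\leq \sup_{S_\I\in\mathcal{S}_j}\mathbb{E}^{x_0}_{S_\I,S_\II^0}[F(x_\tau)]\leq \sup_{S_\I\in\mathcal{S}_3}\mathbb{E}^{x_0}_{S_\I,S_\II^0}[F(x_\tau)]\leq u(x_0)+\eta,
\]
where the last step is precisely the estimate from Theorem \ref{th:value}. Symmetrically, using $S_\I^0$, one obtains $u^{\eps,(j)}_\I(x_0)\geq u(x_0)-\eta$. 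Combining these with the trivial maximin/minimax inequality $u^{\eps,(j)}_\I\leq u^{\eps,(j)}_\II$ and letting $\eta\to 0$ yields $u^{\eps,(j)}_\I=u^{\eps,(j)}_\II=u$ for each $j$; Borel measurability then follows immediately from Theorem \ref{th:dpp}.

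The main subtlety here is keeping track of the directions of monotonicity when the strategy class changes: enlarging the admissible strategies decreases infima and increases suprema, so a priori the value $u^{\eps,(j)}$ need not be monotone in $j$. What rescues the argument is the asymmetry already present in the proof of Theorem \ref{th:value}: the controlling strategy $S_\II^0$ (respectively $S_\I^0$) lies in the smallest class $\mathcal{S}_1$, whereas the resulting one-sided estimate holds \emph{uniformly} over opponent strategies in the largest class $\mathcal{S}_3$. This asymmetry is exactly what lets the same bound serve simultaneously for all three alternatives.
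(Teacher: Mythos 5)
Your proposal is correct and follows essentially the same route as the paper, which simply observes that the proof of Theorem \ref{th:value} constructs the near-optimal strategy within alternative (1) while the resulting estimate holds against arbitrary opponent strategies from alternative (3); you have merely made the inclusions $\mathcal{S}_1\subset\mathcal{S}_2\subset\mathcal{S}_3$ and the symmetric Player~I argument explicit. No gaps.
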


\section{Continuous modification}
\label{sec:cont}
In this section we present a modification of the game near the boundary which produces continuous value functions for continuous boundary values. Given bounded domain $\Omega\subset\R^n$ as before we assume that the  given  boundary function $F$ is defined in the symmetrically thickened boundary 
$$
\Gamma_{\eps,\eps}:=\{ x\in\R^n\; :\; \dist (x,\partial\Omega) \leq \eps\}.
$$
Let $F:\Gamma_{\eps,\eps}\to\R$ be continuous and bounded. Then we modify the tug-of-war game by playing the game just as before, but at each turn when the game state is at $x\in \Gamma_{\eps,\eps}$, we first flip an extra coin which gives heads with probability $\delta (x)$. If the outcome is  heads, the game is stopped at $x$ and the gain is $F(x)$ as before. If the outcome is tails, we play the usual tug-of-war with noise. We use the choice
\begin{equation}
\label{eq:modification}
\begin{split}
\delta(x):=1-\eps^{-1}\dist (x,\Gamma_\eps)\quad {\rm for}\quad x\in \Gamma_{\eps,\eps}.
\end{split}
\end{equation}
As a result, the game is only modified in $\Gamma_{\eps,\eps}$ -- The unmodified game would correspond to the choice $\delta(x)=\chi_{\Gamma_\eps}(x).$ 

The modified DPP takes the form
\begin{equation}\label{eq:contDPP}
\begin{split}
u(x)=&
(1-\delta(x))\Big(\frac{\alpha}{2}\sup_{B_\eps(x)}u+\frac{\alpha}{2}\inf_{B_\eps(x)}u+\beta\intav_{B_\eps(x)}u\ud y\Big)+\delta(x)F(x)
\end{split}
\end{equation}
for all $x\in \Om_\eps$, with interpretation $\delta(x)=0$ and $\delta(x)F(x)=0$ in $\Om\setminus \Gamma_{\eps,\eps}$. 
Also observe that $\delta(x)=1$ on $\Gamma_\eps$, thus a solution $u$ satisfies $u_{|\Gamma_\eps}=F_{|\Gamma_\eps}.$


\begin{theorem}\label{th:cont} Given a continuous  and bounded boundary function $F:\Gamma_{\eps,\eps}\to \R$, there is a unique continuous  $u:\Omega_\eps\to\R$ that solves the DPP \eqref{eq:contDPP} with the choice  \eqref{eq:modification}.  Moreover, if $F$ is Lipschitz, so is $u$.
\end{theorem}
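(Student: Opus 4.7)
The plan is to carry over the three-step programme of Section 2 to the modified operator
\[
\tilde T u(x):=(1-\delta(x))\,A(u)(x)+\delta(x)F(x),\qquad A(u)(x):=\frac{\alpha}{2}\sup_{B_\eps(x)}u+\frac{\alpha}{2}\inf_{B_\eps(x)}u+\beta\intav_{B_\eps(x)}u\,dy,
\]
with $\delta F$ extended by zero outside $\Gamma_{\eps,\eps}$. Since $\delta$ is $\eps^{-1}$-Lipschitz with $\delta\equiv 1$ on $\Gamma_\eps$ and $\delta\equiv 0$ outside $\Gamma_{\eps,\eps}$, and $\tilde T$ is order-preserving, existence follows from the monotone iteration of Theorem \ref{th:dpp}: starting from $u_0\equiv\inf F$ on $\Omega$ and $u_0=F$ on $\Gamma_\eps$, the iterates $u_j=\tilde T^j u_0$ are monotone increasing and uniformly bounded, and the uniform-convergence contradiction step of Theorem \ref{th:dpp} carries over because the extra factor $(1-\delta(x))\leq 1$ only sharpens the estimate.

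For uniqueness, assume solutions $u,u'$ with $M:=\sup_{\Omega_\eps}(u'-u)>0$ and let $G:=\{u'-u=M\}\subset\Omega$ (since $u=u'=F$ on $\Gamma_\eps$). The modified DPP at $x\in G$ gives $M=(1-\delta(x))[A(u')-A(u)](x)\leq(1-\delta(x))M$, forcing $\delta(x)=0$ and hence $G\subset\{x\in\Omega:\dist(x,\partial\Omega)\geq\eps\}$. The identity $|B_\eps(x)\setminus G|=0$ of Theorem \ref{thm:comparison} still holds, and since the open set $B_\eps(x)\cap\{\dist<\eps\}$ is contained in $B_\eps(x)\setminus G$, it has measure zero and is therefore empty; consequently $B_\eps(x)\subset\{\dist\geq\eps\}$ and $\dist(x,\partial\Omega)\geq 2\eps$. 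Iterating yields $G\subset\{\dist\geq k\eps\}$ for every $k$, forcing $G=\emptyset$ once $k\eps$ exceeds the diameter of $\Omega$. On the other hand, the limit-point argument of Theorem \ref{thm:comparison} adapts to the modified DPP --- accumulation points on $\partial\Omega$ or at interior points with $\delta>0$ are ruled out because $(1-\delta)<1$ would then force $M<M$ --- and produces $G\neq\emptyset$. Contradiction; hence $u\leq u'$, and by symmetry $u=u'$.

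For continuity, I would initialize the iteration at a continuous bounded extension $u_0$ of $F|_{\Gamma_\eps}$ to $\Omega_\eps$ (e.g., the McShane extension, which is $L_F$-Lipschitz if $F$ is). For continuous bounded $u$ one has $\sup_{B_\eps(x)}u=\sup_{\overline{B_\eps(x)}}u$, which is continuous in $x$ by a ball-translation estimate, and likewise for $\inf$ and, by dominated convergence, for $\intav_{B_\eps(x)}u\,dy$. Combined with continuity of $\delta$ and $F$, $\tilde T$ preserves continuity, so every iterate is continuous and the uniform limit $u$ inherits continuity. Uniqueness identifies this $u$ with the unique Borel solution.

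For Lipschitz regularity, I would inductively show that the McShane iterates remain $L^*$-Lipschitz for a constant $L^*=L^*(L_F,\eps,\alpha,\beta,\|F\|_\infty)$. The key tools are: (i) $A$ preserves Lipschitz constants via ball-translation; (ii) $\delta$ is $\eps^{-1}$-Lipschitz; (iii) on $\Gamma_{\eps,\eps}$ the boundary decay $|u(y)-F(y)|\leq (L+L_F)\dist(y,\Gamma_\eps)=(L+L_F)\eps(1-\delta(y))$, which follows from $u=F$ on $\Gamma_\eps$ and the joint Lipschitz continuity of $u$ and $F$. Inserting these into the decomposition
\[
\tilde Tu(x)-\tilde Tu(x')=(1-\delta(x))[A(u)(x)-A(u)(x')]+\delta(x)[F(x)-F(x')]+[\delta(x')-\delta(x)][A(u)(x')-F(x')],
\]
the $\eps^{-1}$-blow-up from $[\delta(x')-\delta(x)]$ is compensated by the $\eps$-factor appearing in $|A(u)(x')-F(x')|$. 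The main obstacle I anticipate is closing this induction cleanly: one must select $L^*$ large enough in terms of $L_F$ and $\|F\|_\infty$ so that the resulting bound $\Lip(\tilde T u_j)\leq L^*$ is actually preserved across iterates, which requires the $\delta$-prefactor in the boundary correction to balance the interior Lipschitz factor $(1-\delta(x))$ in the first term.
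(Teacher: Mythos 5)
Your existence, continuity and uniqueness arguments are sound and essentially coincide with the paper's: monotone iteration of the modified operator, the same uniform--convergence contradiction as in Theorem \ref{th:dpp} (the factor $1-\delta(x)\leq 1$ indeed only helps), and propagation of the set $G=\{u'-u=M\}$ by full $\eps$-balls until it collides with the region where $u'=u$. (A minor inconsistency: your first choice of $u_0$, equal to $\inf F$ on $\Omega$ and to $F$ on $\Gamma_\eps$, need not be continuous; the paper simply takes a constant on all of $\Omega_\eps$ below $\inf F$, which suffices because $\delta\equiv 1$ on $\Gamma_\eps$ forces $Tu_0=F$ there after one step. You repair this yourself when you pass to a continuous extension.)

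The Lipschitz step, however, contains a genuine gap --- precisely the one you flag but do not resolve. Your ingredient (i), ``$A$ preserves Lipschitz constants,'' only yields $\Lip(A(u))\leq \tfrac{\alpha}{2}L+\tfrac{\alpha}{2}L+\beta L=L$, i.e.\ coefficient $1$ in front of $L$. The $\delta$-terms in your decomposition then add a quantity comparable to $L$ itself: even with the boundary decay $|u-F|\leq (L+L_F)\eps(1-\delta)$, the relevant factor is $|A(u)(x')-F(x')|$, and the oscillation of $u$ over $B_\eps(x')$ already costs $L\eps$ with no $(1-\delta)$ factor, so $|\delta(x')-\delta(x)|\,|A(u)(x')-F(x')|\leq \eps^{-1}|x-x'|\cdot C(L+L_F)\eps$. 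The resulting recursion has the form $L_{j+1}\leq aL_j+b$ with $a\geq 1$ and $b>0$, which has no finite fixed point, so no single $L^*$ is preserved across iterates. The paper's key observation --- and the reason $\beta>0$ is essential here --- is that the averaged term \emph{smooths}: by the symmetric-difference estimate $|B_\eps(x)\Delta B_\eps(z)|/|B_\eps(x)|\leq 2n\eps^{-1}|x-z|+O(|x-z|^2)$ one has $\Lip\bigl(\intav_{B_\eps(\cdot)}u\,dy\bigr)\leq 2n\eps^{-1}\|u\|_\infty$, a bound depending only on $\|u\|_\infty$ and not on $\Lip(u)$. Hence $\Lip(A(u))\leq \alpha L+2n\beta\eps^{-1}M_0$ with $\alpha<1$, and combining this with $\Lip_{fg}\leq |f|\Lip_g+|g|\Lip_f$ and $\Lip_\delta\leq\eps^{-1}$ gives $\Lip(Tu)\leq \alpha L+C(L_0,M_0,\alpha,\eps)$, which is $\leq L$ once $L\geq (1-\alpha)^{-1}C$. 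Replacing your (i) by this contraction closes the induction; the boundary-decay compensation you propose is then not needed at all.
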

\begin{proof}
%
Similarly as before, we iterate the operator $T$, where $Tu$ is now given by the right hand side of \eqref{eq:contDPP}.
The main point is that 
\[
\begin{split}
&Tu\in C(\Omega_\eps)\text{ for any bounded }u\in C(\Omega_\eps)\text{ and } \\
&Tu_{|\Gamma_\eps}=F\,.
\end{split}
\]
Hence, if we start the iteration from a constant function  $u_0 \in C(\Om_\eps)$ that is less than $\inf_{x\in\Gamma_{\eps,\eps}} F$, we again
obtain an increasing  and bounded sequence  of continuous functions
$u_k:=T^ku_0$. The uniform convergence of the sequence $u_k$ is proven by an obvious modification of the proof of Theorem \ref{th:dpp}. Hence the limit
$u:=\lim_{k\to\infty} u_k$ is continuous, clearly satisfies
the desired DPP, and the condition $u_{|\Gamma_\eps}=F_{|\Gamma_\eps}.$

The proof of the uniqueness is even simpler: if $u'$ is another solution
and $0<M=u'(x_0)-u(x_0)=\sup_{x\in\Omega}(u'-u),$  where $x_0\in\Omega,$ then necessarily $u'-u\equiv M$ in full $\eps$-neighbourhood of $x$. A finite iteration leads to a contradiction at the boundary layer $\Gamma_\eps.$

Next we prove Lipschitz continuity of $u$ if  $F$ is $L_0$-Lipschitz and $\|F\|_\infty\leq M_0.$  It  is enough to show that there is $L=L(L_0,M_0, \alpha,\beta,\eps)\geq L_0$ such that  for arbitrary   $u\in C(\Omega_\eps)$ satisfying
\begin{equation}\label{eq:Lip0}
\|u\|_\infty\leq M \quad {\rm and }\quad u\;\, {\rm is}\;\, L{\rm-Lipschitz},
\end{equation}
the function $Tu$ also satisfies this conditions.

Assume \eqref{eq:Lip0}.  Obviously $\| Tu\|_\infty\leq M$.
Since $Tu|_{\Gamma_\eps}=F_{|\Gamma_\eps}$,  it is clearly enough to verify that 
\begin{equation}\label{eq:Lip1}
\Lip_{Tu}(x)\leq L\quad {\rm for\; any}\;\; x\in \Omega.\qquad 
\end{equation}
Here the pointwise Lipschitz constant is defined for any function $g$ by
$$
\Lip_{g}(x):=\limsup_{y\to x}\frac{|g(y)-g(x)|}{|y-x|}.
$$
Indeed, in order to then show that $Tu$ is $L$-Lipschitz, pick arbitrary $x,x'\in\Omega_\eps$. If the open interval $(x,x')$ is contained in $\Omega$,
the needed estimate follows from \eqref{eq:Lip1}. If not, 
let $y$
be the closest point to $x$ in $[x,x']\cap \Gamma_\eps$ and define $y'$ in an analogous manner. Apply \eqref{eq:Lip1} on intervals $[x,y]$
and $[x',y']$, and use the $L_0$-Lipschitz property of $F$ on
$[y,y']$.

The assumption \eqref{eq:Lip0} also implies for functions $x\mapsto \inf_{B_\eps(x)}u$ and $x\mapsto \sup_{B_\eps(x)}u$, $x\in \Omega,$ that
\begin{equation}\label{eq:Lip2}
\Lip_{\inf_{B_\eps(x)}u}(x),\Lip_{\sup_{B_\eps(x)}u}(x)\leq L\;\; {\rm and}\;\; 
|\inf_{B_\eps(x)}u|,|\sup_{B_\eps(x)}u|\leq M_0.
\end{equation}
Below we deduce for $x\mapsto \intav_{B_\eps(x)}u\ud y$, that
\begin{equation}\label{eq:Lip3}
\Lip_{\intav_{B_\eps(x)}u \ud y}\leq 2n  \eps^{-1}M_0\;\; {\rm and}\;\; 
\abs{\intav_{B_\eps(x)}u\ud y}\leq M_0,\qquad x\in \Omega.
\end{equation}
The upper bound is clear. The Lipschitz estimate is a consequence of the simple geometric estimate for the symmetric difference 
\begin{equation*}
\begin{split}
\frac{\abs{B_\eps(x)\Delta B_\eps(z)}}{|B_\eps(x)|}&\leq \frac{2n\abs{B_{\eps+|x-z|}(x)\setminus B_\eps(x)}}{  |B_\eps(x)|}\\
&=2n  |x-z|\eps^{-1}+O(|x-z|^2).
\end{split}
\end{equation*}
Consequently
\[
\begin{split}
\abs{\intav_{B_\eps(x)}u\ud y-\intav_{B_\eps(z)}u\ud y}&= \abs{\int_{B_\eps(x)}u\ud y-\int_{B_\eps(z)}u\ud y}/\abs{ B_\eps(x)}\\
& \le M_0 (2n  |x-z|\eps^{-1}+O(|x-z|^2)),
\end{split}
\]
which implies the estimate in \eqref{eq:Lip3}.

We will the apply the  above bounds and the well known facts
$$
\Lip_{fg}\leq |f|\Lip_g+|g|\Lip_f\quad {\rm and}\quad \Lip_{c_1f+c_2g}\leq |c_1|\Lip_f+|c_2|\Lip_g.
$$
Denote $h(x):=\frac{\alpha}{2}\sup_{B_\eps(x)}u+\frac{\alpha}{2}\inf_{B_\eps(x)}u+\beta\intav_{B_\eps(x)}u.$ Then  
$$
\Lip_h(x)\leq \frac{\alpha}{2} L+\frac{\alpha}{2}L+\beta 2n  \eps^{-1}M_0,\quad {\rm and}\quad |h(x)|\leq M_0\quad x\in\Omega.
$$
In addition, directly by definition $\Lip_\delta\leq \eps^{-1}$,  and thus  $\Lip_{\delta F}(x)\leq L_0+M_0\eps^{-1}$.
Hence for any $x\in \Omega$
\begin{equation*}
\begin{split}
L_{Tu}(x)&\leq  \eps^{-1}M_0+1\cdot (\alpha L+\beta 2n  \eps^{-1}M_0)+
(L_0+M\eps^{-1})\\ 
&=C(L_0,M_0,\alpha,\eps)+\alpha L\leq L
\end{split}
\end{equation*} 
if $L$ satisfies $L\geq (1-\alpha)^{-1}C(L_0,M_0,\alpha,\eps).$
This finishes the proof.
\end{proof}

One should observe that the bound for $L$ in the above proof blows up (as one expects) as $\eps\to 0.$ On the other hand, one would expect that under suitable conditions  a uniform H\"older bound should remain true.

After the previous result, the proof of the following theorem is completely analoguous to that of Theorem \ref{th:value}.
\begin{theorem}\label{th:contvalue}
The continuous modification of the game has value, and the value function is the unique, continuous solution of the DPP {\rm \eqref{eq:contDPP}} given by Theorem {\rm \ref{th:cont}}. This statement is independent of the alternative ways  {\rm  (1)--(3)} at the end of Section \ref{sec:game} of taking into account the history of the game.
\end{theorem}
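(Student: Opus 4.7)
The plan is to run the proof of Theorem~\ref{th:value} essentially verbatim, with the only new ingredient being the extra stopping coin embedded in the modified DPP~\eqref{eq:contDPP}. Let $u$ denote the unique continuous solution produced by Theorem~\ref{th:cont}. As in the unmodified case, it suffices to show $u_\II^\eps\leq u$; the reverse inequality $u_\I^\eps\geq u$ follows by a symmetric argument, and $u_\II^\eps\geq u_\I^\eps$ is trivial. Since the strategy used for Player~II will depend only on the current game position, the independence from the strategy alternatives (1)--(3) will follow exactly as in Corollary~\ref{co:strategies}.

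Fix $\eta>0$ and, following Lemma~\ref{le:st}, I would have Player~II adopt the Borel strategy $S_\II^0$ that at $x_{k-1}\in\Omega$ selects $x_k\in B_\eps(x_{k-1})$ with $u(x_k)\leq \inf_{B_\eps(x_{k-1})}u+\eta\,2^{-k}$. Let $\tau$ denote the first time the modified game terminates, which now may occur either because the extra coin comes up heads at some $x_{k}\in\Gamma_{\eps,\eps}$ or because the token reaches the outer strip $\Gamma_\eps$, where $\delta\equiv 1$ forces a stop. Since $\beta>0$ provides a genuine random step and $\delta$ is bounded below by a positive constant on a neighbourhood of $\partial\Omega$, a standard geometric argument yields $\E[\tau]<\infty$, so any bounded stopped process is amenable to optional stopping.

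Next I would show that
\[
M_k:=u\bigl(x_{k\wedge\tau}\bigr)+\eta\,2^{-k},
\]
with the convention that $u(x_\tau)$ is replaced by $F(x_\tau)$ when the game is stopped at an interior point of $\Gamma_{\eps,\eps}$, is a bounded supermartingale under any Player~I strategy paired with $S_\II^0$. The one-step estimate splits according to whether $x_{k-1}\in\Gamma_{\eps,\eps}$ or not: in the latter case $\delta(x_{k-1})=0$ and the computation is identical to the one in Theorem~\ref{th:value}; in the former, the conditional expectation is a $\delta(x_{k-1})$-weighted convex combination of $F(x_{k-1})$ (from the extra coin) and the usual tug-of-war average (from the ordinary move with Player~II's almost-minimizing choice and worst-case Player~I), and this collapses back to $u(x_{k-1})$ via \eqref{eq:contDPP}, modulo the $\eta\,2^{-k}$ error absorbed into the supermartingale correction. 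Optional stopping then gives $\E[F(x_\tau)]\leq u(x_0)+\eta$; taking the supremum over Player~I strategies followed by $\eta\to 0$ yields $u_\II^\eps(x_0)\leq u(x_0)$.

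The main obstacle---really the only nontrivial point---is carrying out this one-step supermartingale bound with correct bookkeeping: the factor $\delta(x_{k-1})$ coming from \eqref{eq:contDPP} must be apportioned correctly between the $F$-contribution from the extra coin and the tug-of-war average from the ordinary move, so that both pieces recombine exactly via the modified DPP. Once this step is verified, finiteness of $\tau$, optional stopping, the symmetric argument yielding $u_\I^\eps\geq u$, and the reduction of alternatives (1)--(3) to alternative (1) are all entirely parallel to the corresponding items in the proof of Theorem~\ref{th:value}.
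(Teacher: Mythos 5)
Your proposal is correct and follows exactly the route the paper intends: the paper gives no details for Theorem \ref{th:contvalue}, stating only that the proof is ``completely analogous'' to that of Theorem \ref{th:value}, and you have supplied precisely the missing bookkeeping. In particular you correctly identify the one genuinely new point, namely that the conditional expectation at a state $x_{k-1}\in\Gamma_{\eps,\eps}$ is the $\delta(x_{k-1})$-weighted combination of $F(x_{k-1})$ and the usual one-step average, which recombines to $u(x_{k-1})$ through \eqref{eq:contDPP} with the error $\eta 2^{-k}(1+(1-\delta)\alpha/2)\le \eta 2^{-k}\cdot\tfrac32$ absorbed into the supermartingale correction, so the argument goes through.
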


\section{Parabolic}

The existence, uniqueness and comparison principle for solutions to the parabolic dynamic programming principle, see Definition \ref{defi:p-parabolious} below, are considerably easier to obtain than the elliptic counterparts. These functions are also value functions for the tug-of-war with noise with a limited number of rounds, and possibly varying boundary values. In \cite{manfredipr10c} they are called $(p,\eps)$-parabolic because of their connection to solutions of normalized $p$-parabolic equation.

Recall that $\Om\subset \Rn$ is a bounded open set. Then $\Om_\infty=\Om\times (0,\infty)$ denotes a space-time cylinder. To prescribe boundary values, we
denote the boundary strip by
\[
\begin{split}
\Gamma_p^\eps= \Big((\Om_\eps \setminus \Om )\times (-\frac{\eps^2}{2},\infty) \Big)\cup
\Big(\Om\times (-\frac{\eps^2}{2},0] \Big).
\end{split}
\]
Boundary values are given by $F: \Gamma_p^\eps\to \R$ which is a bounded Borel function. 
\begin{definition}
\label{defi:p-parabolious}
Let $\alpha+\beta=1,\ \alpha,\beta\ge0$. The function $u$ satisfies the parabolic dynamic programming principle with
boundary values $F$ if
\[
\begin{split}
u(x,t)&= \frac{\alpha}{2}
\left\{ \sup_{y\in  B_\eps (x)} u\Big(y,t-\frac{\eps^2}{2}\Big) +
\inf_{y\in  B_\eps (x)} u\Big(y,t-\frac{\eps^2}{2}\Big) \right\}
\\
&\hspace{1 em}+\beta \kint_{B_\eps (x)} u\Big(y,t-\frac{\eps^2}{2}\Big)
\ud y 
\end{split}
\]
for every $(x, t) \in \Omega_\infty$, and 
\[
\begin{split}
u(x,t) &= F(x,t), \qquad \textrm{for every}\quad (x,t) \in
\Gamma_p^\eps.
\end{split}
\]
\end{definition}

Denote by $\mathcal F_\eps$  the space of bounded functions defined on $\Gamma_p^\eps\cup \Omega_\infty$ that are Borel measurable on each time slice $\{(x,s):x\in \Om_\eps,s=t\}$. We define the operator $T$ on $\mathcal F_\eps$ analogously to what we did in the elliptic case:
\[
\begin{split}
Tu(x,t)=\begin{cases}
&\frac{\alpha}{2}
\left\{\sup_{y\in  B_\eps (x)} u\Big(y,t-\frac{\eps^2}{2}\Big) +
\inf_{y\in  B_\eps (x)} u\Big(y,t-\frac{\eps^2}{2}\Big) \right\}
\\
&\hspace{1 em}+\beta \kint_{B_\eps (x)} u\Big(y,t-\frac{\eps^2}{2}\Big)
\ud y \qquad \mbox{for every } (x, t) \in \Omega_\infty
\\
&u(x,t), \qquad \textrm{for every}\quad (x,t) \in
\Gamma_p^\eps.
\end{cases}
\end{split}
\]

\begin{theorem}
Given  initial and boundary values $F$, there exists a unique solution to the parabolic DPP.
\end{theorem}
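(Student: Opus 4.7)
The plan is to exploit the fact that the parabolic DPP is strictly forward in time: the right-hand side only references $u(\cdot, t-\eps^2/2)$, so there is no self-reference within a single time slice and no fixed-point iteration is needed. Instead, one builds $u$ by marching forward in time slabs of height $\eps^2/2$. This replaces the uniform-convergence argument of Theorem \ref{th:dpp} by a finite recursion on each bounded time interval, making both existence and uniqueness essentially automatic.

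For existence, I would first set $u=F$ on $\Gamma_p^\eps$, which in particular determines $u$ on the initial slab $\Omega\times(-\eps^2/2,0]$ and on the lateral strip. Then, inductively for $k=1,2,\ldots$, assume $u$ is defined, bounded by $\|F\|_\infty$, and Borel on each time slice throughout $\Omega_\eps\times(-\eps^2/2,(k-1)\eps^2/2]$. For $(x,t)\in\Omega\times((k-1)\eps^2/2,k\eps^2/2]$ the time $t-\eps^2/2$ lies in the previously constructed slab, so the right-hand side of the parabolic DPP is well defined, and one simply sets $u(x,t):=Tu(x,t)$. The uniform bound by $\|F\|_\infty$ is preserved because $Tu$ is an affine combination of three quantities each bounded by $\|u\|_\infty$. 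Borel measurability in $x$ on each new time slice follows exactly as in \eqref{eq:sup-meas}, and the averaged-integral term is continuous in $x$ by dominated convergence. This constructs a bounded $u\in\mathcal F_\eps$ satisfying $Tu=u$ on $\Omega_\infty$ together with $u_{|\Gamma_p^\eps}=F$.

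For uniqueness, suppose $u$ and $u'$ are two solutions with the same boundary data. They agree on $\Gamma_p^\eps$ by definition, hence on $\Omega_\eps\times(-\eps^2/2,0]$. Assuming they coincide on $\Omega_\eps\times(-\eps^2/2,(k-1)\eps^2/2]$, the parabolic DPP expresses $u(x,t)$ and $u'(x,t)$ for $t\in((k-1)\eps^2/2,k\eps^2/2]$ by the same formula applied to the same data at time $t-\eps^2/2$, so $u=u'$ on the next slab. Induction over $k$ yields $u=u'$ on $\Omega_\infty$.

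I do not expect a genuine obstacle here: the only place that required real work in the elliptic case was the propagation-of-information argument in Theorem \ref{thm:comparison} needed to handle the self-reference at a single spatial scale, and that is completely absent in the parabolic setting. The mildest subtlety is merely verifying measurability slice by slice, which is a direct reprise of the observation following \eqref{eq:sup-meas}.
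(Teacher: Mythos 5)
Your proposal is correct and rests on exactly the same observation as the paper's proof: the parabolic DPP only references the time slice $t-\eps^2/2$, so the solution propagates forward deterministically slab by slab, with measurability handled as in \eqref{eq:sup-meas}. The paper merely packages this as the iteration $u_{i+1}=Tu_i$ stabilizing after finitely many steps on each bounded time interval, which is your slab-by-slab induction in different clothing.
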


\begin{proof}
Let $u_0 \in \mathcal F_\eps$ be an arbitrary function with boundary values $F$. First observe that $T$ preserves the class $\mathcal F_\eps$, see \eqref{eq:sup-meas}.
We claim that the iteration by $u_{i+1}=T(u_i)$, $i=0,2,\ldots$ converges in a finite number of steps at any fixed $(x,t)\in \Om_\infty$. To establish this, we use induction, and show that when calculating the values $$u_{i+1}=Tu_i$$ only the values for $t>i\eps^2/2$ can change. This is clear if $i=0$ since then the operator $T$ uses the values from $[-\eps^2/2,0]$ which are given by $F$. Suppose then that this holds for some $k$ i.e.\ only the values $t>k\eps^2/2$ of $u_k$ can change in the operation $Tu_k$. Then from
\[
\begin{split}
Tu_{k+1}= T(Tu_k)
\end{split}
\]
we deduce that $T$ can only change the values for $t>(k+1)\eps^2/2 $ for $u_{k+1}$. Clearly, the limit satisfies Definition~\ref{defi:p-parabolious}. Moreover, the uniqueness follows by the same induction argument.   
\end{proof}

Suppose that $F_u\ge F_v$, and  $u_0, v_0 \in \mathcal F_\eps$ are arbitrary functions with boundary values $F_u$ and $F_v$ respectively. Then $Tu_0\ge Tv_0$ for $t\leq \eps^2/2$ and by iterating this argument similarly as above, we obtain a comparison principle.
\begin{theorem}
Suppose that $u$ and $v$ satisfy the parabolic DPP with the boundary values $F_u\ge F_v$. Then $u\ge v$.
\end{theorem}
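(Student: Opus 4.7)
The plan is to mirror the inductive argument used in the preceding existence theorem, now applied to the pair $(u,v)$ instead of to a single iterating sequence. Since the parabolic DPP expresses $u(x,t)$ purely in terms of values at the earlier time $t-\eps^2/2$, and the operator $T$ is monotone (averages, suprema and infima are all order preserving), the inequality $F_u\geq F_v$ should propagate forward in time slabs of thickness $\eps^2/2$, exactly as in the remark immediately preceding the theorem.

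More precisely, I would argue by induction on $k\geq 0$ that
\[
u\geq v \quad \text{on}\quad \Gamma_p^\eps\cup\bigl(\Omega_\eps\times(-\eps^2/2, k\eps^2/2]\bigr).
\]
The base case $k=0$ is immediate from the hypothesis, since $\Omega\times(-\eps^2/2,0]\subset \Gamma_p^\eps$ and there $u=F_u\geq F_v=v$. For the inductive step, fix $(x,t)$ with $x\in \Omega_\eps$ and $t\in (k\eps^2/2,(k+1)\eps^2/2]$. If $x\in \Omega_\eps\setminus\Omega$, then $(x,t)\in \Gamma_p^\eps$ and the inequality holds by hypothesis. Otherwise $x\in \Omega$, and both $u(x,t)$ and $v(x,t)$ are given by the same convex combination of $\sup_{B_\eps(x)}$, $\inf_{B_\eps(x)}$ and $\intav_{B_\eps(x)}$ of the respective function evaluated at the time $s:=t-\eps^2/2\in(-\eps^2/2,k\eps^2/2]$. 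Since $B_\eps(x)\subset \Omega_\eps$, the inductive hypothesis gives $u(y,s)\geq v(y,s)$ for every $y\in B_\eps(x)$, and monotonicity of $\sup$, $\inf$ and the integral mean then yields $u(x,t)\geq v(x,t)$.

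Any $(x,t)\in \Omega_\infty$ lies in some slab $(k\eps^2/2,(k+1)\eps^2/2]$, so finitely many induction steps cover all of $\Gamma_p^\eps\cup\Omega_\infty$. There is no real obstacle in the argument: it is just the propagation of a pointwise inequality through the monotone operator $T$, exploiting the finite-propagation structure of the parabolic DPP (each time step depends only on data from the previous slab of width $\eps^2/2$). The only mild point to verify is the inclusion $B_\eps(x)\subset \Omega_\eps$ whenever $x\in \Omega$, which is precisely why the thickened set $\Omega_\eps$ was introduced in the first place.
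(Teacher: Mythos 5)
Your proof is correct and follows essentially the same route as the paper: both arguments rest on the monotonicity of the parabolic operator together with the fact that each time slab of width $\eps^2/2$ depends only on the previous one, so the inequality $F_u\geq F_v$ propagates forward in finitely many steps. The only (harmless) difference is that the paper phrases the induction through the iterates $Tu_0\geq Tv_0$, whereas you run it directly on the solutions $u$ and $v$, which is a slightly more self-contained version of the same idea.
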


\def\cprime{$'$} \def\cprime{$'$}


\end{document}